\numberwithin{equation}{section}
\newtheorem{theorem}{Theorem}[section]
\newtheorem{lemma}[theorem]{Lemma}
\newtheorem{proposition}[theorem]{Proposition}
\newtheorem{corollary}[theorem]{Corollary}
\theoremstyle{definition}
\newtheorem{definition}[theorem]{Definition}
\theoremstyle{remark}
\newtheorem{remark}[theorem]{Remark}
\newcommand\Supp{\operatorname{Supp}}
\newcommand\Tor{\operatorname{Tor}}
\newcommand\Hom{\operatorname{Hom}}
\newcommand\Ext{\operatorname{Ext}}
\newcommand\Rad{\operatorname{Rad}}
\newcommand\cd{\operatorname{cd}}
\newcommand\grade{\operatorname{grade}}
\newcommand{\qism}{\stackrel{\sim}{\longrightarrow}}
\begin{document}
\title[Cohomologically complete intersections]{Cohomologically complete intersections with vanishing of Betti numbers}%
\author[W. Mahmood]{Waqas Mahmood }
\address{Quaid-I-Azam University Islamabad,Pakistan}%
\email{ waqassms$@$gmail.com}

\thanks{This research was partially supported by Higher Education Commission, Pakistan}
\subjclass[2000]{13D45.}
\keywords{Local cohomology, Cohomologically complete intersections, Betti numbers}%

\maketitle
\begin{abstract} Let $I$ be ideal of an $n$-dimensional local Gorenstein ring $R$. In this paper we will describe several necessary and sufficient conditions such that the ideal $I$ becomes cohomologically complete intersections. In fact, as a technical tool, it will be shown that the vanishing $H^i_{I}(R)= 0$ for all $i\neq c= \grade (I)$ is equivalent to the vanishing of the Betti numbers of $H^c_{I}(R)$. This gives a new characterization to check the cohomologically complete intersections property with the homological properties of the vanishing of Tor modules of $H^c_{I}(R)$.
\end{abstract}

\section{Introduction}
For a commutative Noetherian local ring $(R,\mathfrak m,k)$ and an ideal $I\subset R$ we denote $H^i_I(R)$, $i\in \mathbb{Z}$, the local cohomology modules of $R$ with respect to $I$. We refer to see \cite{b} and \cite{goth} for the definition of local cohomology modules. It is a one of the difficult questions to compute the cohomological dimension $\cd (I)$ of $I$ with respect to $R$. Here $\cd (I):=\max\{i: i\in \mathbb{Z}\}.$ Moreover it is well-known that $\grade (I)\leq \cd (I)$.

The ideal $I$ is called cohomologically complete intersections if $H^i_{I}(R)= 0$ for all $i\neq c= \grade (I)$. Note that cohomologically complete intersections property helps us to decide whether an ideal is set-theoretically complete. As a first step M. Hellus and P. Schenzel (see \cite[Theorem 0.1]{pet1}) have shown that $H^i_{I}(R)= 0$ for all $i\neq c= \grade (I)$ if and only if $\dim_k(\Ext^i_R(k,H^c_{I}(R)))= \delta_{n,i}$ provided that $I$ is cohomologically complete intersection in $V(I) \setminus \{ \mathfrak m\}$ over an $n$-dimensional local Gorenstein ring $R$. This was the first time that the cohomologically complete intersections property of $I$ is completely encoded in homological properties of the module $H^c_{I}(R).$ Moreover the above characterization of cohomologically complete intersections looks like a Gorensteiness property.

After that several authors have studied this cohomologically complete intersections property. For instance the author and M. Zargar (see \cite[Theorem 1.1]{waqas2} and \cite[Theorem 1.1]{z}) have generalized this result to a maximal Cohen-Macaulay module of finite injective dimension over a finite dimensional local ring. For an extension to arbitrary finitely generated $R$-module we refer to \cite[Theorem 4.4]{p1}.

Here a natural question arise that can it also be true in case of the vanishing of Betti numbers $\Tor^{R}_{i}(k,H^c_{I}(R))$, $i\in \mathbb{Z}$, of $H^c_{I}(R)$. In this regard we will prove the following result:
\begin{theorem}\label{a1}
Let $R$ be a local Gorenstein ring of dimension $n$ and $I$ be an ideal with $c = \grade(I).$ Then  for all $\mathfrak{p}\in V(I)$ the following conditions are equivalent:
\begin{itemize}
\item[(a)] $H^i_I(R)= 0$ for all $i\neq c,$ that is $I$ is a cohomologically complete intersection.

\item[(b)] The natural homomorphism
\[
\Tor^{R_\mathfrak{p}}_{c}(E_{R_\mathfrak{p}}(k(\mathfrak{p})),H^c_{IR_\mathfrak{p}}(R_\mathfrak{p})))\to E_{R_\mathfrak{p}}(k(\mathfrak{p}))
\]
is an isomorphism and $\Tor^{R_\mathfrak{p}}_{i}(E_{R_\mathfrak{p}}(k(\mathfrak{p})),H^c_{IR_\mathfrak{p}}(R_\mathfrak{p}))= 0$ for all $i\neq c .$
\item[(c)] The natural homomorphism
\[
 E_{R_\mathfrak{p}}(k(\mathfrak{p}))\to H^c_{\mathfrak{p}R_\mathfrak{p}}(\Hom_{R_\mathfrak{p}}(H^c_{IR_\mathfrak{p}}(R_\mathfrak{p}), E_{R_\mathfrak{p}}(k(\mathfrak{p}))))
\]
is an isomorphism and $H^i_{\mathfrak{p}R_\mathfrak{p}}(\Hom_{R_\mathfrak{p}}(H^c_{IR_\mathfrak{p}}(R_\mathfrak{p}), E_{R_\mathfrak{p}}(k(\mathfrak{p}))))= 0$ for all $i\neq c$.

\item[(d)] The natural homomorphism
\[
\Tor^{R_\mathfrak{p}}_{c}(k(\mathfrak{p}),H^c_{IR_\mathfrak{p}}(R_\mathfrak{p}))\to k(\mathfrak{p})
\]
is an isomorphism and $\Tor^{R_\mathfrak{p}}_{i}(k(\mathfrak{p}),H^c_{IR_\mathfrak{p}}(R_\mathfrak{p}))= 0$ for all $i\neq c$.
\end{itemize}
\end{theorem}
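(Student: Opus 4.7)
The plan is to pivot on the derived-category reformulation of (a): namely that (a) is equivalent to the quasi-isomorphism $\mathrm{R}\Gamma_I(R) \simeq H^c_I(R)[-c]$ in the derived category of $R$, which descends by flat base change to $\mathrm{R}\Gamma_{IR_\mathfrak{p}}(R_\mathfrak{p}) \simeq K_\mathfrak{p}[-c]$ at every $\mathfrak{p} \in V(I)$, where I write $K_\mathfrak{p} := H^c_{IR_\mathfrak{p}}(R_\mathfrak{p})$ and $E_\mathfrak{p} := E_{R_\mathfrak{p}}(k(\mathfrak{p}))$. For the implications $(\mathrm{a}) \Rightarrow (\mathrm{b}), (\mathrm{d})$, I would tensor this localized quasi-isomorphism with $E_\mathfrak{p}$ (respectively $k(\mathfrak{p})$) over $R_\mathfrak{p}$, using the \v{C}ech complex as a bounded flat representative of the left-hand side. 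Direct computation then gives $\mathrm{R}\Gamma_{IR_\mathfrak{p}}(E_\mathfrak{p}) \simeq E_\mathfrak{p}$ concentrated in cohomological degree zero --- because $E_\mathfrak{p}$ is an injective $\mathfrak{p}R_\mathfrak{p}$-torsion, hence $IR_\mathfrak{p}$-torsion, module --- and likewise $\mathrm{R}\Gamma_{IR_\mathfrak{p}}(k(\mathfrak{p})) \simeq k(\mathfrak{p})$ in degree zero, since localizing at any element of $IR_\mathfrak{p} \subseteq \mathfrak{p}R_\mathfrak{p}$ annihilates $k(\mathfrak{p})$. Comparing the cohomology on the right-hand side then yields (b) and (d). For (c), I would apply the composition $\mathrm{R}\Gamma_{\mathfrak{p}R_\mathfrak{p}} \circ D_\mathfrak{p}$ with $D_\mathfrak{p} := \Hom_{R_\mathfrak{p}}(-, E_\mathfrak{p})$; the Greenlees--May identification $D_\mathfrak{p}(\mathrm{R}\Gamma_{IR_\mathfrak{p}}(R_\mathfrak{p})) \simeq \mathrm{L}\Lambda^{IR_\mathfrak{p}}(E_\mathfrak{p})$, combined with $\mathrm{R}\Gamma_{\mathfrak{p}R_\mathfrak{p}} \circ \mathrm{L}\Lambda^{IR_\mathfrak{p}}(E_\mathfrak{p}) \simeq \mathrm{R}\Gamma_{\mathfrak{p}R_\mathfrak{p}}(E_\mathfrak{p}) = E_\mathfrak{p}$, delivers (c).

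For the converse direction, I would proceed by Noetherian induction on $\dim R$, aiming to reduce to the Hellus--Schenzel criterion of \cite[Theorem 0.1]{pet1}. The base case $\dim R = 0$ is immediate. In the inductive step, at each $\mathfrak{p} \in V(I) \setminus \{\mathfrak{m}\}$ the ring $R_\mathfrak{p}$ is Gorenstein of strictly smaller dimension, and the hypothesis (b), (c), or (d) at every $\mathfrak{q} \in V(I)$ contained in $\mathfrak{p}$ restricts to the same hypothesis at the corresponding prime of $R_\mathfrak{p}$. The inductive hypothesis therefore yields $H^i_I(R)_\mathfrak{p} = 0$ for all $i \neq c$, so that $I$ becomes a cohomologically complete intersection on the punctured spectrum $V(I) \setminus \{\mathfrak{m}\}$. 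At this point Hellus--Schenzel applies and reduces (a) to the Ext equality $\dim_k \Ext^i_R(k, H^c_I(R)) = \delta_{i,n}$, which I would then extract from the assumed Tor condition at the maximal ideal (respectively from the condition in (b) or (c)) by means of Matlis duality combined with local duality over the Gorenstein ring $R$.

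The technical heart of the argument is this last extraction. Matlis duality only directly gives $D(\Tor^R_i(k, H^c_I(R))) \simeq \Ext^i_R(k, D(H^c_I(R)))$, whereas the Ext numbers demanded by Hellus--Schenzel are against $H^c_I(R)$ itself rather than its Matlis dual. Closing this gap requires a precise identification of $D(H^c_I(R))$ up to a shift via Greenlees--May duality and the Gorenstein self-duality of $R$, and this is where I expect the real difficulty to lie.
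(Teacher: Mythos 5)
Your forward direction is sound and is essentially the paper's own argument in derived-category clothing: the quasi-isomorphism $\mathrm{R}\Gamma_I(R)\simeq H^c_I(R)[-c]$ is exactly what the truncation complex $C^{\cdot}_R(I)$ encodes (it becomes exact when (a) holds), and tensoring with $E_{\mathfrak p}$, $k(\mathfrak p)$, or applying $D_{\mathfrak p}$ followed by $\mathrm{R}\Gamma_{\mathfrak p R_{\mathfrak p}}$ reproduces Corollary \ref{01}. (One small caution: for (c) you compose $\mathrm{R}\Gamma_{\mathfrak p R_{\mathfrak p}}$ with $\mathrm{L}\Lambda^{IR_{\mathfrak p}}$ for \emph{different} ideals $IR_{\mathfrak p}\subseteq \mathfrak p R_{\mathfrak p}$; the standard identity $\mathrm{R}\Gamma_{\mathfrak a}\mathrm{L}\Lambda^{\mathfrak a}\simeq \mathrm{R}\Gamma_{\mathfrak a}$ does not apply verbatim, and it is cleaner to compute $\mathrm{R}\Gamma_{\mathfrak p R_{\mathfrak p}}$ of $\Hom(\check{C}_{\underline y},E_{\mathfrak p})$ directly against the complexes $R/\mathfrak p^s R_{\mathfrak p}$ and pass to the limit, as the paper does in Theorem \ref{2}(b).)

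The converse, however, has a genuine gap, and it sits precisely where you say you ``expect the real difficulty to lie'' --- which means it is not closed. After the induction on dimension gives the cohomologically complete intersection property on $V(I)\setminus\{\mathfrak m\}$, you propose to feed the Hellus--Schenzel criterion $\dim_k\Ext^i_R(k,H^c_I(R))=\delta_{i,n}$, but your hypothesis at $\mathfrak m$ is a Tor condition, and Matlis duality only yields $\dim_k\Ext^i_R(k,D(H^c_I(R)))=\delta_{i,c}$, i.e.\ the Bass numbers of the \emph{dual} module. There is no a priori identification of $D(H^c_I(R))$ with a shift or completion of $H^c_I(R)$ before one already knows (a); such an identification (e.g.\ $\Hom(H^c_I(R),H^c_I(R))\cong\hat R$ or $H^d_{\mathfrak m}(H^c_I(R))\cong E$) is a \emph{consequence} of the cohomologically complete intersection property, so invoking it here is circular. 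The paper circumvents the Ext criterion entirely: from $\dim_k\Tor^R_i(k,H^c_I(R))=\delta_{i,c}$ it bootstraps, by induction along the filtration $R/\mathfrak m^s$ and a snake-lemma/direct-limit argument (Proposition \ref{7}), to $\Tor^R_i(E,H^c_I(R))=0$ for $i\neq c$ and $\Tor^R_c(E,H^c_I(R))\cong E$; by the Gorenstein local duality $\Tor^R_{n-i}(M,E)\cong H^i_{\mathfrak m}(M)$ (Lemma \ref{2.1}) this says $H^i_{\mathfrak m}(H^c_I(R))=0$ for $i\neq d$ and $H^d_{\mathfrak m}(H^c_I(R))\cong E$; Lemma \ref{1} then forces $H^i_{\mathfrak m}(C^{\cdot}_R(I))=0$ for all $i$, and since the punctured-spectrum step shows the cohomology of $C^{\cdot}_R(I)$ is supported in $\{\mathfrak m\}$, it vanishes identically, giving $H^i_I(R)=0$ for $i>c$. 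You need either to adopt this route or to supply an actual proof of the Tor-to-Ext conversion; as written, the converse is not proved.
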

In the above Theorem \ref{a1} $k(\mathfrak{p})$ denotes the residue field of the local Gorenstein ring $R_\mathfrak{p}$ with the injective hull $E_{R_\mathfrak{p}}(k(\mathfrak{p}))$. Moreover the existence of the above natural homomorphisms is shown in Theorem \ref{2}. The new point of view here is a new characterization of an ideal $I$ to be cohomologically complete is equivalent to the following property of Betti numbers of $H^c_{IR_\mathfrak{p}}(R_\mathfrak{p})$
\[
\dim_{k(\mathfrak{p})} (\Tor^{R_\mathfrak{p}}_{i}(k(\mathfrak{p}),H^c_{IR_\mathfrak{p}}(R_\mathfrak{p})))= \delta_{c,i}
\]
for all ${\mathfrak p}\in V(I)$.

\section{Preliminaries}
In this section we will recall few preliminaries and auxiliary results. In the paper we will denote by $(R,\mathfrak{m})$ a commutative Noetherian local ring of finite dimension $n$ with unique maximal ideal $\mathfrak{m}$. Let $E= E_R(k)$ be the injective hull of the residue field $k=R/\mathfrak{m}$. We will denote $D(\cdot)$ by the Matlis dual functor. Moreover for the basic facts about commutative algebra and homological algebra we refer to \cite{b}, \cite{har1}, \cite{m}, and \cite{w}.

Suppose that the homomorphism $X\to Y$ of complexes of $R$-modules induces an isomorphism in homologies. Then it is called quasi-isomorphism. In this case we will write it as $X \qism Y$.

In order to derive the natural homomorphisms of the next Theorem \ref{2} we need the definition of the truncation complex. The truncation complex is firstly introduce in \cite[Definition 4.1]{p2}. Let $E^{\cdot}_R(R)$ be a minimal injective resolution of a local Gorenstein ring $R$ of $\dim(R)= n$. Let $I$ be an ideal of $R$ with $\grade (I)= c$. Then there is an exact sequence
\[
0\to H^c_I(R)\to \Gamma_I(E^{\cdot}_R(R))^c\rightarrow \Gamma_I(E^{\cdot}_R(R))^{c+1}.
\]
Whence there is an embedding of complexes of $R$-modules $ H^c_I(R)[-c]\rightarrow \Gamma_I(E^{\cdot}_R(R))$.

\begin{definition} \label{2.2}
Let $C^{\cdot}_R(I)$ be the cokernel of the above embedding. Then the
truncation complex of $R$ with respect to $I$ is a short exact
sequence of complexes
\[
0\to H^c_I(R)[-c]\to \Gamma_I(E^{\cdot}_R(R))\to C^{\cdot}_R(I)\to 0.
\]
Note that one can easily see from the long exact sequence of cohomologies that $H^i(C^{\cdot}_R(I))= 0$ for all $i\leq c$ or $i> n$ and $H^i(C^{\cdot}_R(I))\cong H^i_I(R)$ for all $c< i\leq n.$
\end{definition}

As a consequence of the truncation complex the following result was proved in \cite[Lemma 2.2 and Corollary 2.3]{pet1}. Note that a generalization of the next result to maximal Cohen-Macaulay modules was given in \cite[Theorem 4.2 and Corollary 4.3]{waqas2}. Moreover recently it has also been extended to finitely generated $R$-modules (see \cite[Lemma 4.2]{p1}). For the sake of completeness we add it here.

\begin{lemma} \label{1}
Let $(R,\mathfrak{m})$ denote an $n$-dimensional Gorenstein ring. Let $I\subset R$ denote an ideal with $c = \grade I$. Then there is a short exact
sequence
\[
0 \to H^{n-1}_{\mathfrak m}(C^{\cdot}_R(I)) \to H^d_{\mathfrak m}(H^c_I(R)) \to E \to H^n_{\mathfrak m}(C^{\cdot}_R(I)) \to 0,
 \]
and isomorphisms $H^{i-c}_{\mathfrak m}(H^c_I(R)) \cong H^{i-1}_{\mathfrak m}(C^{\cdot}_R(I))$ for all $i\neq n,n+1$. Moreover if in addition $H^i_I(R)=0$ for all $i\neq c$ then the map $H^d_{\mathfrak m}(H^c_I(R)) \to E$ is an isomorphism and $H^{i}_{\mathfrak m}(H^c_I(R))=0$ for all $i\neq d.$
\end{lemma}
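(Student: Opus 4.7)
My plan is to apply the derived functor $\Rgam$ to the truncation short exact sequence of Definition \ref{2.2} and read the assertions off the resulting hypercohomology long exact sequence. The crucial observation is that $I\subseteq\mathfrak m$ forces the functorial identity $\Gamma_{\mathfrak m}\circ\gam=\Gamma_{\mathfrak m}$; applied to the injective resolution this gives $\Gamma_{\mathfrak m}(\gam(E^{\cdot}_R(R)))=\Gamma_{\mathfrak m}(E^{\cdot}_R(R))$, and since $\gam(E^{\cdot}_R(R))$ is a complex of injective modules this already computes $\Rgam$ of the middle term. The Gorenstein hypothesis then yields $\Rgam(R)\simeq E[-n]$, i.e.\ $H^n_{\mathfrak m}(R)=E$ and $H^i_{\mathfrak m}(R)=0$ for $i\neq n$.

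Setting $d=n-c$ and applying $\Rgam$ to
\[
0\to H^c_I(R)[-c]\to \gam(E^{\cdot}_R(R))\to C^{\cdot}_R(I)\to 0
\]
produces the long exact sequence
\[
\cdots\to H^{i-1}_{\mathfrak m}(C^{\cdot}_R(I))\to H^{i-c}_{\mathfrak m}(H^c_I(R))\to H^{i}_{\mathfrak m}(R)\to H^{i}_{\mathfrak m}(C^{\cdot}_R(I))\to H^{i+1-c}_{\mathfrak m}(H^c_I(R))\to\cdots .
\]
For $i\neq n,n+1$ both $H^{i-1}_{\mathfrak m}(R)$ and $H^{i}_{\mathfrak m}(R)$ vanish, so the connecting map is the claimed isomorphism $H^{i-c}_{\mathfrak m}(H^c_I(R))\cong H^{i-1}_{\mathfrak m}(C^{\cdot}_R(I))$. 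Splicing around $i=n$ yields the five-term exact sequence
\[
0\to H^{n-1}_{\mathfrak m}(C^{\cdot}_R(I))\to H^d_{\mathfrak m}(H^c_I(R))\to E\to H^{n}_{\mathfrak m}(C^{\cdot}_R(I))\to H^{d+1}_{\mathfrak m}(H^c_I(R))\to 0.
\]

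Collapsing this to the stated four-term sequence requires the vanishing $H^{d+1}_{\mathfrak m}(H^c_I(R))=0$, which I expect to be the main obstacle. I would obtain it from a support bound: any $\mathfrak p\in\Supp(H^c_I(R))$ satisfies $\grade(IR_{\mathfrak p})=c$, where the upper bound $\grade(IR_{\mathfrak p})\le c$ comes from non-vanishing of the localized module and the reverse inequality from persistence of a maximal $R$-regular sequence in $I$ under localization (which holds since $R$ is Gorenstein). Since $R_{\mathfrak p}$ is also Gorenstein, $\height\mathfrak p\ge\height(IR_{\mathfrak p})=c$, so $\dim R/\mathfrak p\le n-c=d$; hence $\dim\Supp(H^c_I(R))\le d$ and Grothendieck's vanishing theorem gives $H^{j}_{\mathfrak m}(H^c_I(R))=0$ for every $j>d$.

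Finally, under the additional hypothesis that $H^i_I(R)=0$ for all $i\neq c$, the vanishing noted in Definition \ref{2.2} forces $H^i(C^{\cdot}_R(I))=0$ for every $i$, so $C^{\cdot}_R(I)$ is acyclic and $\Rgam(C^{\cdot}_R(I))\simeq 0$. The four-term sequence then collapses to the isomorphism $H^d_{\mathfrak m}(H^c_I(R))\cong E$, while the previously derived isomorphisms combined with the support bound force $H^{j}_{\mathfrak m}(H^c_I(R))=0$ for all $j\neq d$.
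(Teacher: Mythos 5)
The paper offers no proof of its own here---it simply cites \cite[Lemma 2.2 and Corollary 2.3]{pet1}---and your argument is correct and is essentially the one given there: apply $\Rgam$ to the truncation sequence, identify the middle term with $\Rgam(R)\simeq E[-n]$ via $\Gamma_{\mathfrak m}\circ\Gamma_I=\Gamma_{\mathfrak m}$ on the injective complex together with the Gorenstein hypothesis, and collapse the resulting five-term sequence using $\dim\Supp(H^c_I(R))\le d$ and Grothendieck vanishing (which does hold for non-finitely-generated modules). One cosmetic point: persistence of a regular sequence under localization at $\mathfrak p\supseteq I$ needs only flatness of $R\to R_{\mathfrak p}$, whereas the Gorenstein (Cohen--Macaulay) hypothesis is what you actually use for $\grade(IR_{\mathfrak p})=\height(IR_{\mathfrak p})$ and for $\dim R/\mathfrak p=n-\height\mathfrak p$.
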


\begin{proof}
For the proof see \cite[Lemma 2.2 and Corollary 2.3]{pet1}.
\end{proof}

In the following we will obtain some natural homomorphisms with the help of the truncation complex. These maps will be used further to investigate the property of cohomologically complete intersection ideals.
\begin{theorem}\label{2}
Let $R$ be a Gorenstein ring of dimension $n$ and $I$ an ideal with $c = \grade(I).$ Then we have the following results:
\begin{itemize}
\item[(a)]  There is an exact sequence
\[
0\to \Tor_{-1}^R(E,C^{\cdot}_R(I))\to \Tor_{c}^R(E,H^c_I(R))\to E\to \Tor_{0}^R(E,C^{\cdot}_R(I))\to 0
\]
and isomorphism $\Tor_{c-i}^R(E, H^c_I(R))\cong \Tor_{-(i+1)}^R(E,C^{\cdot}_R(I))$ for all $i\neq 0, -1$
\item[(b)] There is an exact sequence
\[
0\to H^0_{\mathfrak m}(D(C^{\cdot}_R(I)))\to E\to H^c_{\mathfrak m}(D(H^c_I(R)))\to H^1_{\mathfrak m}(D(C^{\cdot}_R(I)))\to 0
\]
and isomorphism $H^{c+i}_{\mathfrak m}(D(H^c_I(R)))\cong H^{i+1}_{\mathfrak m}(D(C^{\cdot}_R(I)))$ for all $i\neq 0, -1$.
\item[(c)] There is an exact sequence
\[
0\to \Tor_{-1}^R(k,C^{\cdot}_R(I))\to \Tor_{c}^R(k,H^c_I(R))\to k\to \Tor_{0}^R(k,C^{\cdot}_R(I))\to 0
\]
and isomorphism $\Tor_{c-i}^R(k,H^c_I(R))\cong \Tor_{-(i+1)}^R(k,C^{\cdot}_R(I))$ for all $i\neq 0, -1$.
\end{itemize}
\end{theorem}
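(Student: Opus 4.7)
The plan is to apply three derived functors to the distinguished triangle
\[
H^c_I(R)[-c] \to \gam(E^{\cdot}_R(R)) \to C^{\cdot}_R(I) \to H^c_I(R)[1-c]
\]
coming from the short exact sequence of Definition \ref{2.2}, and to read off in each case the induced long exact sequence of hypercohomology. The three functors are $E\otimes^{\rm L}_R(-)$ for part (a), $\Rgam\circ D(-)$ for (b), and $k\otimes^{\rm L}_R(-)$ for (c).

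The decisive auxiliary computation in each case is to identify the value of the functor on the middle term $\gam(E^{\cdot}_R(R)) \simeq \mathrm{R}\Gamma_I(R)$; I expect a complex concentrated in cohomological degree $0$ with value $E$, $E$, and $k$ respectively. For (a) and (c) the quickest route is to replace $\mathrm{R}\Gamma_I(R)$ by the \v{C}ech complex $\Cech$, which is a bounded complex of flat $R$-modules whose positive-degree terms are localizations $R[f_{i_1}^{-1}\!\cdots\! f_{i_p}^{-1}]$ with each $f_{i_j}\in I\subseteq\mathfrak{m}$. Tensoring with the $\mathfrak{m}$-torsion module $E$ kills every such localization (each $f_{i_j}$ acts locally nilpotently on $E$), and tensoring with $k=R/\mathfrak{m}$ does the same (each $f_{i_j}$ is zero in $k$); only the degree-zero term survives, yielding $E\otimes^{\rm L}_R\mathrm{R}\Gamma_I(R)\simeq E$ and $k\otimes^{\rm L}_R\mathrm{R}\Gamma_I(R)\simeq k$. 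For (b) I would invoke Greenlees--May duality $D(\mathrm{R}\Gamma_I(R))\simeq\mathrm{L}\Lambda^I(E)$ together with the identity $\Rgam\circ\mathrm{L}\Lambda^I\simeq\Rgam$; the latter follows from the MGM equivalence $\mathrm{R}\Gamma_I\circ\mathrm{L}\Lambda^I\simeq\mathrm{R}\Gamma_I$ combined with $\Rgam\circ\mathrm{R}\Gamma_I\simeq\Rgam$ (since $I\subseteq\mathfrak{m}$ forces $\mathfrak{m}+I=\mathfrak{m}$), so that $\Rgam(\mathrm{L}\Lambda^I(E))\simeq\Rgam(E)=E$.

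With the three identifications in place, the long exact sequence from the triangle yields the displayed four-term exact sequences and the isomorphisms at the remaining indices essentially automatically. In each case the middle term of the triangle contributes only a single non-zero module in a single cohomological degree, so the long exact sequence breaks into one four-term piece straddling that degree together with isomorphisms elsewhere. The natural homomorphism $\Tor^R_c(E,H^c_I(R))\to E$ in (a) is the degree-zero cohomology of the morphism $E\otimes^{\rm L}_R H^c_I(R)[-c]\to E\otimes^{\rm L}_R\gam(E^{\cdot}_R(R))\simeq E$; the natural maps in (b) and (c) arise in the same way.

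The main obstacle I anticipate is the duality step $\Rgam(D(\mathrm{R}\Gamma_I(R)))\simeq E$ in part (b). The \v{C}ech-complex arguments underlying (a) and (c) are one-line verifications using only that $E$ and $k$ are killed by every element of $\mathfrak{m}$, but (b) genuinely requires either Greenlees--May together with the interaction of $\mathrm{L}\Lambda^I$ and $\Rgam$ sketched above, or an alternative term-by-term computation on the indecomposable injective summands $E_R(R/\mathfrak{p})$ ($\mathfrak{p}\in V(I)$) of the minimal resolution $E^{\cdot}_R(R)$, verifying that $\Rgam(D(E_R(R/\mathfrak{p})))$ vanishes for $\mathfrak{p}\neq\mathfrak{m}$ while the remaining summand $E$ in cohomological degree $n$ produces $\Rgam(D(E))\simeq E[-n]$, so that after the shift the net contribution is $E$ in degree $0$.
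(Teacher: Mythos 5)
Your arguments for (a) and (c) are essentially the paper's, merely phrased in derived-category language: the paper replaces $E\otimes^{\rm L}_R(-)$ and $k\otimes^{\rm L}_R(-)$ by explicit free resolutions $F^R_{\cdot}\to E$ and $L^R_{\cdot}\to k$, and the decisive identification $E\otimes^{\rm L}_R\mathrm{R}\Gamma_I(R)\simeq E$ (respectively with $k$) is obtained in both accounts by passing to the \v{C}ech complex $\Check{C}_{\underline y}$ and using that every localization at an element of $I\subseteq\mathfrak m$ kills an $\mathfrak m$-supported module. For (b), however, you take a genuinely different route. The paper avoids Greenlees--May/MGM entirely: it dualizes the truncation sequence (exact, since $D$ is exact), applies $\Hom_R(F_{\cdot}(R/\mathfrak m^s),-)$, uses Hom--Tensor adjunction to rewrite the middle term as $D\bigl(F_{\cdot}(R/\mathfrak m^s)\otimes_R\Gamma_I(E^{\cdot}_R(R))\bigr)$, observes that this is $D(R/\mathfrak m^s)$ concentrated in degree $0$ by exactly the argument already used in (a) (now with $R/\mathfrak m^s$ in place of $E$), and finally passes to the direct limit over $s$ to recover $H^i_{\mathfrak m}\circ D$. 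This is more elementary and stays entirely within the circle of ideas already set up; your argument is shorter and more conceptual but imports the machinery of Greenlees--May duality $D(\mathrm{R}\Gamma_I(R))\simeq\mathrm{L}\Lambda^I(E)$ and the MGM equivalence $\mathrm{R}\Gamma_I\circ\mathrm{L}\Lambda^I\simeq\mathrm{R}\Gamma_I$, which the paper never invokes. Both are valid. One caution on your proposed fallback computation: the claimed term-by-term vanishing of $\Rgam\bigl(D(E_R(R/\mathfrak p))\bigr)$ for $\mathfrak p\neq\mathfrak m$ is correct for each indecomposable summand, but $D$ of the injective modules $\Gamma_I(E^j_R(R))$ turns direct sums into direct \emph{products}, and local cohomology does not commute with infinite products, so assembling the termwise vanishing into the vanishing for the whole complex is not automatic; if you want an elementary term-by-term argument, it is cleaner to first replace $\Gamma_I(E^{\cdot}_R(R))$ by the quasi-isomorphic finite complex $\Check{C}_{\underline y}$ of flat modules, so that $D(\Check{C}_{\underline y})$ is a bounded complex of \emph{injectives} with only finitely many factors in each degree, and then $\Gamma_{\mathfrak m}$ can be applied termwise without trouble.
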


\begin{proof}
Let $F_{\cdot}^R$ be a free resolution of $E$. Then the short exact sequence of the truncation complex induces the following short exact sequence of complexes of $R$-modules
\begin{equation}\label{1aa}
0\to (F_{\cdot}^R\otimes_{R} H^c_I(R))[-c]\to
F_{\cdot}^R\otimes_{R}\Gamma_I(E^{\cdot}_R(R))\to F_{\cdot}^R\otimes_{R} C^{\cdot}_R(I)\to 0
\end{equation}
Now let $\underline{y} = y_1,\ldots, y_r$ be a generating set of the ideal $I$ and $\Check{C}_{\underline{y}}$ denote the \v{C}ech complex with respect
to $\underline{y}$. Since $E^{\cdot}_R(R)$ is a complex of injective $R$-modules. Then by \cite[Theorem 3.2]{pet2} the middle complex is a quasi-isomorphic to $F_{\cdot}^R\otimes_{R} \Check{C}_{\underline{y}}\otimes_{R} E^{\cdot}_R(R)$.

Moreover $\Check{C}_{\underline{y}}$ and $F_{\cdot}^R$ are complexes of flat $R$-modules so there are the following quasi-isomorphisms
\[
F_{\cdot}^R\otimes_{R}\Check{C}_{\underline{y}} \qism F_{\cdot}^R\otimes_{R} \Check{C}_{\underline{y}}\otimes_{R} E^{\cdot}_R(R), \text { and }
\]
\[
F_{\cdot}^R\otimes_R \Check{C}_{\underline{y}} \qism E\otimes_R \Check{C}_{\underline{y}}
\]
But $\Supp_R(E)= V(\mathfrak{m})$ it follows that $E\otimes_R \Check{C}_{\underline{y}}\cong E$. Therefore we get the homologies $H^i(F_{\cdot}^R\otimes_{R}\Gamma_I(E^{\cdot}_R(R)))=0$ for all $i\neq 0$ and $E$ for $i=0$. Then the statement $(a)$ can be deduced from the homology sequence of the above exact sequence \ref{1aa}.

Now apply the functor $D(\cdot)=\Hom_R({\cdot}, E)$ to the truncation complex then it induces the short exact sequence
\begin{equation}\label{e}
0\to D(C^{\cdot}_R(I))\to D(\Gamma_I(E^{\cdot}_R(R)))\to D(H^c_I(R))[c]\to 0.
\end{equation}
Let $F_{\cdot}(R/\mathfrak{m}^s)$ be a free resolution of $R/\mathfrak{m}^s$ for each $s\in \mathbb{N}$. Take $\Hom_R(F_{\cdot}(R/\mathfrak{m}^s),{\cdot})$ to this sequence then the resulting exact sequence of complexes is the following:
\begin{gather*}
0\to \Hom_R(F_{\cdot}(R/\mathfrak{m}^s),D(C^{\cdot}_R(I)))\to \Hom_R(F_{\cdot}(R/\mathfrak{m}^s),D(\Gamma_I(E^{\cdot}_R(R))))\to \\ \Hom_R(F_{\cdot}(R/\mathfrak{m}^s),D(H^c_I(R)))[c] \to 0.
\end{gather*}
Now we look at the cohomologies of the complex in the middle. By Hom-Tensor Duality it is isomorphic to $D(F_{\cdot}(R/\mathfrak{m}^s)\otimes_{R}\Gamma_I(E^{\cdot}_R(R)))$.
Since the Matlis dual functor $D(\cdot)$ is exact and cohomologies commutes with exact functor. So there is an isomorphism
\[
H^i(D(F_{\cdot}(R/\mathfrak{m}^s)\otimes_{R}\Gamma_I(E^{\cdot}_R(R))))\cong D(H^i(F_{\cdot}(R/\mathfrak{m}^s)\otimes_{R}\Gamma_I(E^{\cdot}_R(R))))
\]
for all $i\in \mathbb{Z}$. Then, by the same arguments as we used in the proof of $(a)$, the cohomologies on the right side are zero for each $i\neq 0$ and for $i=0$ it is isomorphic to $D(R/\mathfrak{m}^s)$. Recall that support of $R/\mathfrak{m}^s$ is contained in $V(\mathfrak{m}).$ Then by cohomology sequence there is an exact sequence
\begin{gather*}\label{we}
0\to \Hom_R(R/\mathfrak{m}^s,D(C^{\cdot}_R(I)))\to D(R/\mathfrak{m}^s)\to \Ext^c_R(R/\mathfrak{m}^s,D(H^c_I(R)))\to \\
\Ext^1_R(R/\mathfrak{m}^s,D(C^{\cdot}_R(I)))\to 0
\end{gather*}
and the isomorphism $\Ext^{i+1}_R(R/\mathfrak{m}^s,D(C^{\cdot}_R(I)))\cong \Ext^{i+c}_R(R/\mathfrak{m}^s,D(H^c_I(R)))$ for all $i\neq 0,-1$. Take the direct limit we get the following exact sequence
\[
0\to H^0_{\mathfrak m}(D(C^{\cdot}_R(I)))\to E\to H^c_{\mathfrak m}(D(H^c_I(R)))\to H^1_{\mathfrak m}(D(C^{\cdot}_R(I)))\to 0
\]
and for all $i\neq 0, -1$
\[
H^{c+i}_{\mathfrak m}(D(H^c_I(R)))\cong H^{i+1}_{\mathfrak m}(D(C^{\cdot}_R(I))).
\]
This proves the statement in $(b)$.

Now let $L^R_{\cdot}$ denote the free resolution of $k$. Then short exact sequence of truncation complex induces the exact sequence
\begin{equation}\label{e1}
0\to (L^R_{\cdot}\otimes_R H^c_I(R))[-c]\to L^R_{\cdot}\otimes_R \Gamma_I(E^{\cdot}_R(R))\to L^R_{\cdot}\otimes_R C^{\cdot}_R(I)\to 0.
\end{equation}
Then by the proof of $(a)$ the cohomologies of the middle complex of this last sequence are $H^i(L^R_{\cdot}\otimes_R \Gamma_I(E^{\cdot}_R(R)))=0$ for all $i\neq 0$ and for $i=0$ it is $k$. This gives the statement in $(c)$ by virtue of long exact sequence of cohomologies. This finishes the proof of the Theorem.
\end{proof}

We close this section with the the following version of the Local Duality Lemma. The proof of it can be found in \cite{goth}. Note that in \cite[Theorem 6.4.1]{he}, \cite[Theorem 3.1]{p1}, or \cite[Lemma 2.4]{waqas1} there is a generalization of it to arbitrary cohomologically complete intersection ideals.
\begin{lemma} \label{2.1}
Let $I$ be an ideal of a Gorenstein ring $R$ with $\dim(R)= n$. Then for any $R$-module $M$ and for all $i\in \mathbb{Z}$ we have
\begin{itemize}
\item[(1)] $\Tor_{n-i}^R(M, E) \cong H^i_{\mathfrak{m}}(M)$.
\item[(2)] $D(H^i_\mathfrak{m}(M)) \cong \Ext^{n-i}_R(M, \hat{R})$. Here $\hat{R}$ denotes the of completion of $R$ with respect to the maximal ideal.
\end{itemize}
\end{lemma}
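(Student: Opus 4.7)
The plan is to reduce everything to the observation that, for a Gorenstein ring $R$ of dimension $n$, the complex $\Rgam(R)$ is quasi-isomorphic to $E[-n]$, and then to propagate this via tensor-Hom dualities.

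First, since $R$ is Gorenstein of dimension $n$, the minimal injective resolution $E^{\cdot}_R(R)$ satisfies $H^i(\Gamma_{\mathfrak m}(E^{\cdot}_R(R))) = H^i_{\mathfrak m}(R) = 0$ for $i\neq n$ and $H^n_{\mathfrak m}(R)\cong E$. Hence in the derived category there is a quasi-isomorphism $\Rgam(R)\qism E[-n]$.

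For part (1), I would invoke the standard identification $\Rgam(M) \qism M\lo \Rgam(R)$, valid for any $R$-module $M$; this arises because $\Rgam(-)$ is computed by the \v{C}ech complex $\Cech$ with respect to a generating system $\xx$ of $\mathfrak m$, which is a bounded complex of flat modules. Combining with the quasi-isomorphism above yields
\[
H^i_{\mathfrak m}(M) \cong H^i(M \lo E[-n]) \cong H^{i-n}(M\lo E) \cong \Tor^R_{n-i}(M, E),
\]
which is the desired isomorphism in (1).

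For part (2), I would apply the Matlis dual $D(\cdot)=\Hom_R(\cdot,E)$ to part (1). Since $E$ is injective, $D$ is exact and hence commutes with cohomology, and the derived tensor-Hom adjunction supplies a natural isomorphism $D(M\lo N)\simeq \RHom_R(M, D(N))$. Using the standard identification $D(E)\cong \hat R$, I obtain
\[
D(\Tor^R_{n-i}(M,E)) \cong H^{n-i}(\RHom_R(M,\hat R)) = \Ext^{n-i}_R(M,\hat R),
\]
which combined with (1) yields $D(H^i_{\mathfrak m}(M))\cong \Ext^{n-i}_R(M,\hat R)$.

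The main subtlety will be to justify the quasi-isomorphism $\Rgam(R)\qism E[-n]$ and the derived tensor-Hom adjunction for an \emph{arbitrary} (not necessarily finitely generated) $R$-module $M$; once these two formal inputs are secured, the rest of the argument is bookkeeping with shifts and Matlis duality. The Gorenstein hypothesis on $R$ is essential only for the first point, and it is what drives the entire argument.
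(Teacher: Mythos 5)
Your proof is correct. The paper itself gives no argument for this lemma --- it simply cites Grothendieck's notes \cite{goth} --- so there is no line-by-line comparison to make; what you have written is the standard self-contained derived-category derivation, and it is sound. Both of the ``formal inputs'' you flag at the end are in fact already secure. First, for a Gorenstein local ring of dimension $n$ the minimal injective resolution $E^{\cdot}_R(R)$ has $\Gamma_{\mathfrak m}(E^i)=0$ for $i<n$ and $\Gamma_{\mathfrak m}(E^n)=E$ (the Bass numbers are $\mu_i(\mathfrak p,R)=\delta_{i,\height \mathfrak p}$), so $\Gamma_{\mathfrak m}(E^{\cdot}_R(R))$ is \emph{literally} the complex $E$ concentrated in degree $n$, not merely quasi-isomorphic to it; there is no subtlety here. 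Second, the identification $\Rgam(M)\qism \Cech\otimes_R M\qism M\lo \Rgam(R)$ for an \emph{arbitrary} $R$-module $M$ is exactly the content of the weak-proregularity theorem of Schenzel that the paper already invokes elsewhere (\cite[Theorem 3.2]{pet2}): over a Noetherian ring every finite sequence of elements is weakly proregular, so the \v{C}ech complex on a generating system of $\mathfrak m$ computes $\Rgam$ on all modules, and being a bounded complex of flat modules it also computes the derived tensor product. Your part (2) is likewise fine: taking a projective resolution $P_{\cdot}$ of $M$, exactness of $D(\cdot)$ gives $D(H_j(P_{\cdot}\otimes_R E))\cong H^j(\Hom_R(P_{\cdot}\otimes_R E,E))\cong H^j(\Hom_R(P_{\cdot},\Hom_R(E,E)))=\Ext^j_R(M,\hat R)$, using $\Hom_R(E,E)\cong\hat R$; no finiteness hypothesis on $M$ is needed anywhere, which matches the generality asserted in the lemma and is the reason $\hat R$ (rather than $R$) appears on the right-hand side.
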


\section{On cohomologically complete intersections}
In this section we will see when the natural homomorphisms, described in above Theorem \ref{2}, are isomorphisms. The first result in this regard is obtained in the following Corollary provided that the ideal $I$ is of cohomologically complete intersections. In fact the next result tells us that all the Betti numbers of the module $H^c_I(R)$ vanish except at degree $c$.
 \begin{corollary}\label{01}
With the notation of Theorem \ref{2} suppose in addition that $H^i_I(R)= 0$ for all $i\neq c.$ Then the following are true:
\begin{itemize}
\item[(a)] The natural homomorphism
\[
\Tor_{c}^R(E,H^c_I(R))\to E
\]
is an isomorphism and $\Tor_{i}^R(E,H^c_I(R))=0$ for all $i\neq c$
\item[(b)] The natural homomorphism
\[
E\to H^c_{\mathfrak m}(D(H^c_I(R)))
\]
is an isomorphism and $H^{i}_{\mathfrak m}(D(H^c_I(R)))=0$ for all $i\neq c$.
\item[(c)] The natural homomorphism
\[
\Tor_{c}^R(k,H^c_I(R))\to k
\]
is an isomorphism and $\Tor_{i}^R(k,H^c_I(R))=0$ for all $i\neq c$. That is the Betti numbers of $H^c_I(R)$ satisfy
\[
\dim_k(\Tor^{R}_i(k,H^c_I(R)))=\delta_{c,i}.
\]
\end{itemize}
\end{corollary}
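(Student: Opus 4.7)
The plan is to show that the hypothesis forces the truncation complex $C^\cdot_R(I)$ to be acyclic, from which all three assertions read off immediately by substituting zero into the exact sequences and isomorphisms of Theorem~\ref{2}.

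First I would invoke the description of the cohomology of $C^\cdot_R(I)$ given at the end of Definition~\ref{2.2}: namely, $H^i(C^\cdot_R(I))=0$ for $i\leq c$ or $i>n$, and $H^i(C^\cdot_R(I))\cong H^i_I(R)$ for $c<i\leq n$. Under the hypothesis $H^i_I(R)=0$ for all $i\neq c$, this collapses to $H^i(C^\cdot_R(I))=0$ for every $i\in\mathbb{Z}$; so $C^\cdot_R(I)$ is acyclic, i.e.\ quasi-isomorphic to the zero complex.

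Next I would exploit the flatness of the free resolutions $F^R_\cdot$ (of $E$) and $L^R_\cdot$ (of $k$) used in Theorem~\ref{2}, together with the exactness of the Matlis dual functor $D(\cdot)$, to transfer this acyclicity to the three complexes $F^R_\cdot\otimes_R C^\cdot_R(I)$, $D(C^\cdot_R(I))$ and $L^R_\cdot\otimes_R C^\cdot_R(I)$. This yields
\[
\Tor^R_i(E,C^\cdot_R(I))=0,\qquad H^i_\mathfrak{m}(D(C^\cdot_R(I)))=0,\qquad \Tor^R_i(k,C^\cdot_R(I))=0
\]
for every $i\in\mathbb{Z}$. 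Substituting these zeros into the four--term exact sequence of Theorem~\ref{2}(a) leaves $0\to 0\to \Tor^R_c(E,H^c_I(R))\to E\to 0\to 0$, so the natural map becomes an isomorphism; the companion isomorphisms of the same theorem force $\Tor^R_j(E,H^c_I(R))=0$ for the remaining~$j$. Exactly the same procedure applied verbatim to parts (b) and (c) gives $E\cong H^c_\mathfrak{m}(D(H^c_I(R)))$ with all other $H^i_\mathfrak{m}(D(H^c_I(R)))$ vanishing, and $\Tor^R_c(k,H^c_I(R))\cong k$ with all other Tor modules vanishing; the Betti number identity $\dim_k\Tor^R_i(k,H^c_I(R))=\delta_{c,i}$ is then just a restatement of~(c).

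The one point that demands real care is index bookkeeping: the four--term sequences together with the displayed isomorphisms in Theorem~\ref{2} must be checked to cover every Tor or local-cohomology degree, and not merely all but one. If a stray degree (most plausibly $\Tor^R_{c+1}(E,H^c_I(R))$ in part (a)) appeared to escape the accounting, it would be handled as a fall--back by local duality: Lemma~\ref{2.1}(1) gives $\Tor^R_{n-i}(H^c_I(R),E)\cong H^i_\mathfrak{m}(H^c_I(R))$, and Lemma~\ref{1} under the hypothesis kills $H^i_\mathfrak{m}(H^c_I(R))$ outside $i=d=n-c$, so every such edge is automatically zero. The analogous dual observation disposes of the matching edge in (b), closing the argument.
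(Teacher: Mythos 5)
Your proposal is correct and follows essentially the same route as the paper: the hypothesis makes $C^{\cdot}_R(I)$ acyclic, this acyclicity passes to $F^R_{\cdot}\otimes_R C^{\cdot}_R(I)$, $L^R_{\cdot}\otimes_R C^{\cdot}_R(I)$ and (via exactness of $D$, then $\Hom$ from free resolutions of $R/\mathfrak m^s$ and a direct limit) to the $\mathfrak m$-local cohomology of $D(C^{\cdot}_R(I))$, and the conclusions drop out of the long exact sequences attached to the truncation complex. Your worry about a stray degree is moot — returning to the long exact cohomology sequences of \ref{1aa} and \ref{e1} (as the paper does) covers every index once the $C^{\cdot}_R(I)$-terms vanish — so the local-duality fallback is unnecessary, though harmless.
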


\begin{proof}
Since $H^i_I(R)= 0$ for all $i\neq c$ so the complex $C^{\cdot}_R(I)$ is an exact complex (by definition of the truncation complex). Let $F_{\cdot}^R$, $F_{\cdot}(R/\mathfrak{m}^s)$ and $L^{\cdot}_R$ be the free resolutions of $E$, $R/\mathfrak{m}^s$ and $k$ respectively. Then it follows that all the following complexes
\[
F_{\cdot}^R\otimes_R C^{\cdot}_R(I),
\]
\[
\Hom_R(F_{\cdot}(R/\mathfrak{m}^s),D(C^{\cdot}_R(I))),\text { and }
\]
\[
L^{\cdot}_R\otimes_R C^{\cdot}_R(I)
\]
are exact. Then from the long exact sequence of cohomologies of the sequences \ref{1aa} and \ref{e1} we can easily obtained the statements in $(a)$ and $(c)$. Moreover after applying the functor $\Hom_R(F_{\cdot}(R/\mathfrak{m}^s),D(C^{\cdot}_R(I)))$ to the sequence \ref{e} we get, from the cohomology sequence, that the natural homomorphism
\[
D(R/\mathfrak{m}^s)\to \Ext^c_R(R/\mathfrak{m}^s,D(H^c_I(R)))
\]
is an isomorphism and $\Ext^{i}_R(R/\mathfrak{m}^s,D(H^c_I(R)))=0$ for all $i\neq c$. By passing to the direct limits we get the statement in $(b)$. Hence the proof of the Corollary is complete.
\end{proof}

In order to prove Theorem \ref{a} which is one of the main result of this section we need the following result. Note that the next Theorem provides us the equivalence of the natural homomorphisms of Theorem \ref{2} such that they become isomorphisms. Moreover this equivalence related to the vanishing of the Betti numbers of $H^c_I(R)$ for $\grade(I)=c$.

\begin{theorem}\label{6}
Let $I$ be an ideal of an $n$-dimensional Gorenstein ring $R$ with $\grade(I)=c$. Then the following conditions are equivalent:
\begin{itemize}
\item[(a)] The natural homomorphism
\[
E\to H^c_{\mathfrak m}(D(H^c_I(R)))
\]
is an isomorphism and $H^i_{\mathfrak m}(D(H^c_I(R)))=0$ for all $i\neq c.$
\item[(b)] The natural homomorphism
\[
\Tor_{c}^R(k,H^c_I(R))\to k
\]
is an isomorphism and $\Tor_{i}^R(k,H^c_I(R))= 0$ for all $i\neq c$.
\item[(c)] The Betti numbers of $H^c_I(R)$ satisfy
\[
\dim_k(\Tor^{R}_i(k,H^c_I(R)))=\delta_{i,c}.
\]
\end{itemize}
\end{theorem}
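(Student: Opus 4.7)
The plan is to prove the cycle $(b) \Rightarrow (c) \Rightarrow (a) \Rightarrow (b)$, with Theorem \ref{2} as the central tool. The implication $(b) \Rightarrow (c)$ is immediate, since (b) already contains the dimension statement of (c).

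For $(a) \Rightarrow (b)$, I will first read off from Theorem \ref{2}(b) that (a) is equivalent to the vanishing $H^j_{\mathfrak m}(D(C^{\cdot}_R(I))) = 0$ for every $j \in \mathbb Z$: the $4$-term exact sequence and the displayed isomorphisms in Theorem \ref{2}(b) collapse to this single condition exactly when the natural map is an isomorphism and the other $H^i_{\mathfrak m}(D(H^c_I(R)))$ vanish. To translate to $\Tor$ I would use the Matlis adjunction $\Ext^j_R(k, D(X)) \cong D(\Tor_j^R(k, X))$ together with the hypercohomology spectral sequence
\[
E_2^{p,q} = \Ext^p_R(k, H^q_{\mathfrak m}(D(C^{\cdot}_R(I)))) \Rightarrow \Ext^{p+q}_R(k, D(C^{\cdot}_R(I))),
\]
which is available because $k$ is $\mathfrak m$-torsion (so $\RHom_R(k,-) \simeq \RHom_R(k,\Rgam(-))$). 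This forces $\Ext^j_R(k, D(C^{\cdot}_R(I))) = 0$ and hence $\Tor_j^R(k, C^{\cdot}_R(I)) = 0$ for all $j$, and Theorem \ref{2}(c) then delivers (b).

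For $(c) \Rightarrow (a)$, the Matlis adjunction recasts (c) as the Bass-number condition $\dim_k \Ext^i_R(k, D(H^c_I(R))) = \delta_{i,c}$. An induction on $s$ along the short exact sequences $0 \to \mathfrak m^{s-1}/\mathfrak m^s \to R/\mathfrak m^s \to R/\mathfrak m^{s-1} \to 0$, using that $\mathfrak m^{s-1}/\mathfrak m^s$ is a finite-dimensional $k$-vector space, upgrades this to $\Ext^i_R(R/\mathfrak m^s, D(H^c_I(R))) = 0$ for $i \neq c$ and all $s$; passing to the direct limit gives $H^i_{\mathfrak m}(D(H^c_I(R))) = 0$ for $i \neq c$. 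For $i = c$ the same induction identifies $\Ext^c_R(R/\mathfrak m^s, D(H^c_I(R)))$ compatibly with $D(R/\mathfrak m^s) \cong E[\mathfrak m^s]$, so $H^c_{\mathfrak m}(D(H^c_I(R))) \cong \varinjlim_s E[\mathfrak m^s] = E$. Finally, since $\End_R(E) \cong \hat{R}$, the natural map of Theorem \ref{2}(b) is either zero or an isomorphism, and the $4$-term exact sequence of Theorem \ref{2}(b) together with the just-established vanishing for $i \neq c$ forces it to be nonzero, hence an isomorphism.

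The main obstacle is the last step of $(c) \Rightarrow (a)$: verifying that the natural map $E \to H^c_{\mathfrak m}(D(H^c_I(R)))$ is genuinely an isomorphism rather than the zero map. This is the place where one must combine the inductively computed identification $H^c_{\mathfrak m}(D(H^c_I(R))) \cong E$ with the explicit chain-level construction behind Theorem \ref{2}(b), exploiting $\End_R(E) \cong \hat{R}$ to rule out non-invertible endomorphisms of $E$.
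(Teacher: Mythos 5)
Your overall structure (cycle $(b)\Rightarrow(c)\Rightarrow(a)\Rightarrow(b)$ versus the paper's $(b)\Leftrightarrow(c)$ and $(a)\Leftrightarrow(b)$) is fine, and your spectral-sequence version of $(a)\Rightarrow(b)$ is a legitimate repackaging of the paper's argument: both are ways of showing that $\Ext^{*}_R\bigl(k,D(C^{\cdot}_R(I))\bigr)=0$ from $H^{*}_{\mathfrak m}\bigl(D(C^{\cdot}_R(I))\bigr)=0$, the paper via \v{C}ech complexes and resolutions, you via $\RHom_R(k,-)\simeq\RHom_R(k,\Rgam(-))$ and a hypercohomology spectral sequence. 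That part is sound.

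The gap is in $(c)\Rightarrow(a)$, and you correctly flag the problem but then offer a fix that does not work. Your induction establishes $\Ext^i_R\bigl(R/\mathfrak m^s,D(H^c_I(R))\bigr)=0$ for $i\neq c$ and a length count in degree $c$; that gives $H^i_{\mathfrak m}\bigl(D(H^c_I(R))\bigr)=0$ for $i\neq c$ and an \emph{abstract} isomorphism $H^c_{\mathfrak m}\bigl(D(H^c_I(R))\bigr)\cong E$, but it says nothing about whether the \emph{natural} map $E\to H^c_{\mathfrak m}\bigl(D(H^c_I(R))\bigr)$ from Theorem \ref{2}(b) realizes that isomorphism. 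The claim that $\End_R(E)\cong\hat R$ forces the map to be ``either zero or an isomorphism'' is false: $\hat R$ is a local ring with plenty of nonzero nonunits, and multiplication by such an element is a nonzero, non-invertible endomorphism of $E$. So the dichotomy you invoke to finish the argument does not hold, and the last step is genuinely missing.

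The paper closes this gap by arguing through $C^{\cdot}_R(I)$ rather than through $H^c_I(R)$. From the hypothesis $(b)$ and the four-term sequence plus isomorphisms of Theorem \ref{2}(c), one first gets $\Tor^R_i\bigl(k,C^{\cdot}_R(I)\bigr)=0$ for \emph{all} $i$. The induction on $s$ along $0\to\mathfrak m^s/\mathfrak m^{s+1}\to R/\mathfrak m^{s+1}\to R/\mathfrak m^s\to 0$ is then applied to $C^{\cdot}_R(I)$, giving $\Tor^R_i\bigl(R/\mathfrak m^s,C^{\cdot}_R(I)\bigr)=0$ for all $i,s$, hence by Matlis duality $\Ext^i_R\bigl(R/\mathfrak m^s,D(C^{\cdot}_R(I))\bigr)=0$ for all $i,s$. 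With \emph{those} outer terms vanishing at every finite level $s$, the finite-level four-term exact sequence in the proof of Theorem \ref{2}(b) degenerates, so the natural map $D(R/\mathfrak m^s)\to\Ext^c_R\bigl(R/\mathfrak m^s,D(H^c_I(R))\bigr)$ is an isomorphism for every $s$, and passing to the direct limit hands you the natural isomorphism $E\to H^c_{\mathfrak m}\bigl(D(H^c_I(R))\bigr)$ with no further argument. In short: run the induction on $C^{\cdot}_R(I)$ (where you get vanishing in every degree) rather than on $D(H^c_I(R))$ (where degree $c$ survives and you lose control of the natural map), and the problematic last step disappears.
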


\begin{proof}
Note that the equivalence of $(b)$ and $(c)$ is obvious. Now we prove that $(a)$ implies $(b)$. Since $H^i_{\mathfrak m} (D(H^c_I(R)))= 0$ for all $i\neq c$. By Theorem \ref{2} $(a)$ it follow that $H^i_{\mathfrak m} (D(C^{\cdot}_R(I)))= 0$ for all $i\in \mathbb{Z}$. Let $\Check{C}_{\underline{x}}$ be the \v{C}ech complex with respect to $\underline{x}= x_1, \ldots ,x_s\in \mathfrak{m}$ such that $\Rad \mathfrak{m}= \Rad(\underline{x})R$. Then it implies that $\Check{C}_{\underline{x}}\otimes_{R} D(C^{\cdot}_R(I))$ is an exact complex.

Suppose that $F^{\cdot}_R$ denote a minimal injective resolution of $D(C^{\cdot}_R(I)).$ Let us denote $X:= \Hom_R(k, F^{\cdot}_R)$ then there is an isomorphism
\[
\Ext^{i}_R(k, D(C^{\cdot}_R(I)))\cong H^i(X)
\]
for all $i\in \mathbb{Z}$. We claim that the complex $X$ is homologically trivial. To this end note that the support of each module of $X$ is in $\{\mathfrak{m}\}$. It follows that there is an isomorphism of complexes
\[
\Check{C}_{\underline{x}}\otimes_{R} X\cong X.
\]
So in order to prove the claim it will be enough to show that $H^i(\Check{C}_{\underline{x}}\otimes_{R} X)=0$ for all $i\in \mathbb{Z}$. By the above arguments, for a free resolution $L_{\cdot}^R$ of $k$, the following complex is exact
\[
Y:=\Hom_R(L_{\cdot}^R, \Check{C}_{\underline{x}}\otimes_{R} D(C^{\cdot}_R(I))).
\]
Moreover $L_{\cdot}^R$ is a right bounded complex of finitely generated free $R$-modules and $\Check{C}_{\underline{x}}$ is a bounded complex of flat $R$-modules. So by \cite[Proposition 5.14]{har1} $Y$ is quasi-isomorphic to $\Check{C}_{\underline{x}}\otimes_{R} \Hom_R(L_{\cdot}^R, D(C^{\cdot}_R(I)))$. So it is homologically trivial. Note that the morphism of complexes
\[
\Check{C}_{\underline{x}}\otimes_{R} \Hom_R(L_{\cdot}^R, D(C^{\cdot}_R(I)))\to \Check{C}_{\underline{x}}\otimes_{R} \Hom_R(L_{\cdot}^R, F^{\cdot}_R)
\]
induces an isomorphism in cohomologies. Because of $F^{\cdot}_R$ is a minimal injective resolution of $C^{\cdot}_M(I)$. Moreover
\[
\Check{C}_{\underline{x}}\otimes_{R} X \qism \Check{C}_{\underline{x}}\otimes_{R} \Hom_R(L_{\cdot}^R, F^{\cdot}_R).
\]
By the discussion above the complex on the right side is homologically trivial. It follows that the complex $\Check{C}_{\underline{x}}\otimes_{R} X$ is homologically trivial. This proves the claim. Therefore $\Ext^{i}_R(k, D(C^{\cdot}_R(I)))=0$ for all $i\in \mathbb{Z}$.

By Hom-Tensor Duality $D(\Tor_{i}^R(k,C^{\cdot}_R(I)))\cong\Ext^{i}_R(k, D(C^{\cdot}_R(I)))=0$ for all $i\in \mathbb{Z}$. It implies that $L_{\cdot}^R\otimes_R C^{\cdot}_R(I)$ is an exact complex. Then from the cohomology sequence of the exact sequence \ref{e1} it follows that the natural homomorphism
\[
\Tor_{c}^R(k,H^c_I(R))\to k
\]
is an isomorphism and $\Tor_{i}^R(k,H^c_I(R))= 0$ for all $i\neq c$.

Conversely, note that after the application of the functor $\Tor^{R}_i(\cdot, C^{\cdot}_R(I))$ to the exact sequence
\[
0\to \mathfrak m^s/\mathfrak m^{s+1}\to R/\mathfrak m^{s+1}\to R/\mathfrak m^{s}\to 0
\]
induces the exact sequence
\[
\Tor^{R}_i(\mathfrak m^s/\mathfrak m^{s+1}, C^{\cdot}_R(I)) \to \Tor^{R}_i(R/\mathfrak m^{s+1}, C^{\cdot}_R(I))\to \Tor^{R}_i(R/\mathfrak m^s, C^{\cdot}_R(I))
\]
for all $i\in \mathbb Z$. Then by induction on $s$ , in view of vanishing of Tot modules, this proves that $\Tor^{R}_i(R/\mathfrak m^s, C^{\cdot}_M(I)))=0$ for all $i\in \mathbb{Z}$ and for all $s\in \mathbb N$ (see Theorem \ref{2}$(c)$). It follows that
\[
\Ext^{i}_R(R/\mathfrak m^s, D(C^{\cdot}_R(I)))\cong D(\Tor_{i}^R(R/\mathfrak m^s,C^{\cdot}_R(I)))=0
\]
for all $i\in \mathbb{Z}$ and for all $s\in \mathbb N$. Recall that $\mathfrak m^s/\mathfrak m^{s+1}$ is a finite dimensional $k$-vector space. Then by virtue of the proof of Theorem \ref{2}$(b)$ the natural homomorphism 
\[
D(R/\mathfrak{m}^s)\to \Ext^c_R(R/\mathfrak{m}^s,D(H^c_I(R)))
 \]
is an isomorphism and $\Ext^{i}_R(R/\mathfrak{m}^s,D(H^c_I(R)))=0$ for all $i\neq c$. By Passing to the direct limit of it we can easily obtain the statement in $(a)$. This completes the proof of the Theorem.
\end{proof}

\begin{lemma}\label{03}
With the above notation suppose that $N:= D(D(H^c_I(R)))$ then we have:
\begin{itemize}
\item[(a)] The natural homomorphism
\[
E\to H^c_{\mathfrak m}(D(H^c_I(R)))
\]
is an isomorphism if and only if the natural homomorphism
\[
\lim_{\longleftarrow} \Tor^R_{c}(R/\mathfrak{m}^s,N)\to \hat{R}
\]
is an isomorphism.
\item[(b)] Let $i\in \mathbb{Z}$ be a fixed integer. Then the module $H^i_{\mathfrak m}(D(H^c_I(R)))= 0$ if and only if $\lim\limits_{\longleftarrow} \Tor^R_{i}(R/\mathfrak{m}^s,N)= 0$.
\end{itemize}
\end{lemma}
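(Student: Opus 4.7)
The plan is to identify $\varprojlim_s \Tor^R_i(R/\mathfrak{m}^s, N)$ with the Matlis dual of $H^i_{\mathfrak{m}}(D(H^c_I(R)))$; once this is established, both parts follow from formal properties of the functor $D(\cdot)$.

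For this identification, fix $s \in \mathbb{N}$ and let $P_\bullet \to R/\mathfrak{m}^s$ be a resolution by finitely generated free $R$-modules. Because each $P_j$ is finitely presented and $D$ is exact, Hom-tensor duality yields a natural isomorphism of complexes
\[
P_\bullet \otimes_R D(D(H^c_I(R))) \cong D(\Hom_R(P_\bullet, D(H^c_I(R)))),
\]
and passing to homology produces
\[
\Tor^R_i(R/\mathfrak{m}^s, N) \cong D\bigl(\Ext^i_R(R/\mathfrak{m}^s, D(H^c_I(R)))\bigr).
\]
Since $D$ carries direct limits to inverse limits and since $H^i_{\mathfrak{m}}(\cdot) \cong \varinjlim_s \Ext^i_R(R/\mathfrak{m}^s, \cdot)$, taking inverse limits over $s$ gives
\[
\varprojlim_s \Tor^R_i(R/\mathfrak{m}^s, N) \cong D\bigl(H^i_{\mathfrak{m}}(D(H^c_I(R)))\bigr).
\]
For $i = c$, using $D(E) \cong \hat{R}$ (Lemma \ref{2.1}(2) applied to $M = R$, $i = n$, together with the Gorenstein identity $H^n_{\mathfrak{m}}(R) \cong E$) and the reflexivity $D(D(R/\mathfrak{m}^s)) \cong R/\mathfrak{m}^s$, one verifies that the natural map $\varprojlim_s \Tor^R_c(R/\mathfrak{m}^s, N) \to \hat{R}$ is precisely the image under $D$ of the canonical map $E \to H^c_{\mathfrak{m}}(D(H^c_I(R)))$ produced in Theorem \ref{2}(b).

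Part (b) is then immediate, because Matlis duality is faithful on $R$-modules: any nonzero element $y$ of a module $Y$ generates a cyclic submodule surjecting onto $k$, and composing with $k \hookrightarrow E$ and extending by the injectivity of $E$ yields a nonzero map $Y \to E$. Thus $H^i_{\mathfrak{m}}(D(H^c_I(R))) = 0$ if and only if its Matlis dual vanishes, i.e.\ if and only if $\varprojlim_s \Tor^R_i(R/\mathfrak{m}^s, N) = 0$. For part (a), I upgrade faithfulness to the statement that $D$ reflects isomorphisms: given $f : Y \to Z$ with $D(f)$ an isomorphism, the dualisations of the short exact sequences extracted from $0 \to \Ker f \to Y \to Z \to \Coker f \to 0$ force $D(\Ker f) = D(\Coker f) = 0$, whence $\Ker f = \Coker f = 0$ by faithfulness. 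Applying this criterion to $E \to H^c_{\mathfrak{m}}(D(H^c_I(R)))$ gives the desired equivalence.

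The main obstacle I anticipate is bookkeeping in the first step: one must track that the specific connecting homomorphism produced by the truncation complex in the proof of Theorem \ref{2}(b) is genuinely the Matlis dual of the inverse-limit map named in the statement. Once the naturality of Hom-tensor duality and of the reflexivity $D(D(R/\mathfrak{m}^s)) \cong R/\mathfrak{m}^s$ is carried through the direct system $\{R/\mathfrak{m}^s\}_{s}$, everything else is formal.
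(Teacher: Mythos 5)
Your argument is essentially the paper's: both identify $\varprojlim_s \Tor^R_i(R/\mathfrak{m}^s, N)$ with $D\bigl(H^i_{\mathfrak{m}}(D(H^c_I(R)))\bigr)$ via Hom--Tensor duality and the exactness of $D$, then conclude via Matlis duality. You simply spell out the faithfulness of $D$ and the fact that $D$ reflects isomorphisms, which the paper leaves as ``obvious in view of Matlis duality.''
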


\begin{proof}
Since Hom functor transforms the direct system into an inverse system at first variable. Moreover $H^i_{\mathfrak{m}}(D(H^c_I(R)))\cong \lim\limits_{\longrightarrow}\Ext_R^{i}(R/\mathfrak{m}^s,D(H^c_I(R)))$ for each $i\in \mathbb{Z}$. By taking the Matlis dual of this induces the isomorphism
\[
D(H^i_{\mathfrak{m}}(D(H^c_I(R))))\cong \lim\limits_{\longleftarrow} D(\Ext_R^{i}(R/\mathfrak{m}^s,D(H^c_I(R))))
\]
for each $i\in \mathbb{Z}$. Then by Hom-Tensor duality the module on the right side is isomorphic to
\[
\lim_{\longleftarrow} \Tor^R_{i}(R/\mathfrak{m}^s,N).
\]
Then both of the statements are obvious in view of Matlis duality.
\end{proof}

The next Proposition is indeed in the proof of Theorem \ref{a}. So we will prove it firstly.
\begin{proposition}\label{7}
Let $R$ be an $n$-dimensional local Gorenstein ring. Let $I$ be an ideal with $\grade(I)=c$ and $d:=n-c$. If the natural homomorphism
\[
\Tor_{c}^R(k,H^c_I(R))\to k
\]
is an isomorphism and $\Tor_i^R(k,H^c_I(R))=0$ for all $i\neq c$. Then the following conditions hold:
\begin{itemize}
\item[(a)] The natural homomorphism
\[
H^d_{\mathfrak m}(H^c_I(R)) \to E
\]
is an isomorphism and  $H^{i}_{\mathfrak m}(H^c_I(R))=0$ for all $i\neq d$.

\item[(b)] The natural homomorphism
\[
\Tor_{c}^R(E,H^c_I(R))\to E
\]
is an isomorphism and $\Tor_{i}^R(E,H^c_I(R))=0$ for all $i\neq c$.
\end{itemize}
\end{proposition}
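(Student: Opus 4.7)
My plan is to leverage Theorem \ref{6}, the exact sequences of Theorem \ref{2}, and local duality (Lemma \ref{2.1}), in order to bootstrap from the given vanishing of Tor modules with $k$ to the stronger statements in (a) and (b).

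First I would invoke Theorem \ref{6}: the hypothesis is precisely condition (b) there, so by the equivalence proved in that theorem the natural map $E \to H^c_{\mathfrak m}(D(H^c_I(R)))$ is an isomorphism and $H^i_{\mathfrak m}(D(H^c_I(R))) = 0$ for $i \neq c$. Feeding this information into the $4$-term exact sequence and the isomorphisms of Theorem \ref{2}(b) immediately forces $H^i_{\mathfrak m}(D(C^{\cdot}_R(I))) = 0$ for every $i \in \mathbb{Z}$.

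The crucial step is then to upgrade this to $\Tor_i^R(E, C^{\cdot}_R(I)) = 0$ for every $i$. I would take an injective resolution $F^{\cdot}_R$ of $D(C^{\cdot}_R(I))$; since $\Supp(E) = \{\mathfrak m\}$, every map $E \to F^j_R$ factors through $\Gamma_{\mathfrak m}(F^j_R)$, so that $\Hom_R(E, F^{\cdot}_R) = \Hom_R(E, \Gamma_{\mathfrak m}(F^{\cdot}_R))$. The complex $\Gamma_{\mathfrak m}(F^{\cdot}_R)$ is a bounded-below, acyclic complex of injective modules, hence null-homotopic, so $\Ext_R^i(E, D(C^{\cdot}_R(I))) = 0$ for all $i$. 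By Hom-Tensor adjunction this Ext group is isomorphic to $D(\Tor_i^R(E, C^{\cdot}_R(I)))$, and the Matlis dual being faithful on $R$-modules then yields the desired vanishing. Part (b) is now immediate from Theorem \ref{2}(a): the $4$-term sequence degenerates into the natural isomorphism $\Tor_c^R(E, H^c_I(R)) \cong E$, and the isomorphisms force $\Tor_i^R(E, H^c_I(R)) = 0$ for $i \neq c$.

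For part (a), I would apply a hypercohomology version of Lemma \ref{2.1}(1) to the complex $C^{\cdot}_R(I)$: combined with the vanishing from the previous step this gives $H^i_{\mathfrak m}(C^{\cdot}_R(I)) = 0$ for every $i$, so the $4$-term sequence of Lemma \ref{1} collapses to the natural isomorphism $H^d_{\mathfrak m}(H^c_I(R)) \to E$ together with $H^j_{\mathfrak m}(H^c_I(R)) = 0$ for $j \neq d, d+1$. The remaining degree $j = d+1$ is then handled by combining part (b) with Lemma \ref{2.1}(1) applied to the module $H^c_I(R)$: $H^{d+1}_{\mathfrak m}(H^c_I(R)) \cong \Tor_{c-1}^R(H^c_I(R), E) = 0$.

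The main obstacle is the central upgrading step from vanishing of $H^i_{\mathfrak m}(D(C^{\cdot}_R(I)))$ to vanishing of $\Tor_i^R(E, C^{\cdot}_R(I))$, which requires juggling the $\mathfrak m$-torsion of $E$, the null-homotopy of acyclic bounded-below complexes of injective modules, and the faithfulness of the Matlis dual. Should this route prove delicate, one can instead imitate the \v Cech-complex manipulations used in the proof of Theorem \ref{6}, or argue by d\'evissage: Theorem \ref{2}(c) together with the hypothesis yields $\Tor_i^R(k, C^{\cdot}_R(I)) = 0$ for all $i$, an induction on length extends this to every finite length $R$-module, and then $E = \lim_{\longrightarrow} D(R/\mathfrak m^s)$ combined with the commutation of direct limits with Tor closes the argument.
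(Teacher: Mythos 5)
Your main argument is correct and takes a genuinely different route from the paper. The paper's proof is a self-contained d\'evissage: it sets $N_\alpha := D(R/\mathfrak m^\alpha)$, produces natural maps $f_\alpha\colon \Tor^R_c(N_\alpha,H^c_I(R))\to N_\alpha$ directly from the truncation complex, and then uses the Matlis duals of the filtration $0\to\mathfrak m^\alpha/\mathfrak m^{\alpha+1}\to R/\mathfrak m^{\alpha+1}\to R/\mathfrak m^\alpha\to 0$ together with the hypothesis (the case $N_1=k$) and the snake lemma to inductively establish both the vanishing of $\Tor^R_i(N_\alpha,H^c_I(R))$ for $i\neq c$ and the bijectivity of $f_\alpha$, before passing to the colimit $E=\varinjlim N_\alpha$; part (a) is dispatched at the outset as a formal consequence of part (b) via Lemma~\ref{2.1}(1). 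Your route instead first feeds the hypothesis through Theorem~\ref{6} to obtain $H^i_{\mathfrak m}(D(H^c_I(R)))=0$ for $i\neq c$ plus the isomorphism at $c$, pushes this through the four-term sequence of Theorem~\ref{2}(b) to kill $H^i_{\mathfrak m}(D(C^{\cdot}_R(I)))$ for all $i$, and then does the genuinely new step: since $E$ is $\mathfrak m$-torsion, $\Hom_R(E,F^{\cdot}_R)=\Hom_R(E,\Gamma_{\mathfrak m}(F^{\cdot}_R))$ for an injective resolution $F^{\cdot}_R$ of $D(C^{\cdot}_R(I))$, and $\Gamma_{\mathfrak m}(F^{\cdot}_R)$ is a bounded-below acyclic complex of injectives, hence contractible, so $\Ext^i_R(E,D(C^{\cdot}_R(I)))=0$ for all $i$, whence (by Hom--Tensor duality and faithfulness of Matlis duality) $\Tor^R_i(E,C^{\cdot}_R(I))=0$ for all $i$, and Theorem~\ref{2}(a) finishes (b). This is correct, and the contractibility argument is in fact cleaner than the \v{C}ech-complex manipulation used inside the paper's proof of Theorem~\ref{6}(a)$\Rightarrow$(b), though your overall path is less elementary since it leans on Theorem~\ref{6} as a black box. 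Your derivation of (a) is slightly more circuitous than necessary --- once (b) is in hand, Lemma~\ref{2.1}(1) applied to the single module $H^c_I(R)$ gives all of (a) at once, with no need to revisit $C^{\cdot}_R(I)$ or split off the degree $d+1$ case. Finally, the ``d\'evissage'' fallback you sketch at the end is essentially the paper's own proof, just phrased with $\Tor^R_i(k,C^{\cdot}_R(I))$ in place of $\Tor^R_i(N_\alpha,H^c_I(R))$.
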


\begin{proof}
Note that by Local Duality Lemma \ref{2.1} (for $M=H^c_I(R)$) it will be enough to prove the statements in $(b)$. 

For this let $N_{\alpha}:=D(R/\mathfrak m^\alpha)$ for each $\alpha\in \mathbb N$. Suppose that $F_{\cdot}(N_{\alpha})$ denote a minimal free resolution of $N_{\alpha}$ for each $\alpha\in \mathbb N$. Since the support of $N_{\alpha}$ is contained in $\{\mathfrak{m}\}$. Then after the implement of the functor $ \cdot \otimes_R F_{\cdot}(N_{\alpha})$ to the truncation complex gives us the following natural homomorphisms
\[
f_\alpha: \Tor^{R}_c(N_{\alpha}, H^c_I(R))\to N_{\alpha}
\]
for all $\alpha\in \mathbb N$ (by the proof of Theorem \ref{2}$(a)$). Now the short exact sequence $0\to \mathfrak m^\alpha/\mathfrak m^{\alpha+1}\to R/\mathfrak m^{\alpha+1}\to R/\mathfrak m^{\alpha}\to 0$ induces the exact sequence
\[
0\to N_{\alpha}\to N_{\alpha+1}\to D(\mathfrak m^\alpha/\mathfrak m^{\alpha+1})\to 0
\]
Moreover after the application of the functor $\Tor^{R}_i(\cdot, H^c_I(R))$ to the this sequence we get the exact sequence
\[
\Tor^{R}_i(N_{\alpha}, H^c_I(R)) \to \Tor^{R}_i(N_{\alpha+1}, H^c_I(R))\to \Tor^{R}_i(D(\mathfrak m^\alpha/\mathfrak m^{\alpha+1}), H^c_I(R))
\]
for all $i\in \mathbb Z$. Since $N_{1}=D(R/\mathfrak m)\cong k$. Then by induction on $\alpha$ , in view of vanishing of Tor modules, this proves that $\Tor^{R}_i(N_{\alpha}, H^c_I(R)))=0$ for all $i\neq c$ and for all $\alpha\in \mathbb N$.

Now we show that $f_{\alpha}$ is an isomorphism for all $\alpha\in \mathbb N$. Clearly $f_1$ is isomorphism (by assumption). Then there is the following commutative diagram with exact rows
\[
\begin{array}{cccccccc}
  & & \Tor^{R}_c(N_{\alpha}, H^c_I(M)) & \to & \Tor^{R}_c(N_{\alpha+1}, H^c_I(M)) & \to & \Tor^{R}_c(D(\mathfrak m^\alpha/\mathfrak m^{\alpha+1}), H^c_I(M)) & \to 0\\
    &   & \downarrow {f_\alpha } &  & \downarrow {f_{\alpha+1}} &   & \downarrow f &\\
 0 & \to & N_{\alpha} & \to & N_{\alpha+1} & \to & D(\mathfrak m^\alpha/\mathfrak m^{\alpha+1}) & \to 0
\end{array}
\]
Note that the above row is exact because of $\Tor^{R}_i(N_{\alpha}, H^c_I(R))=0$ for all $i\neq c$ and for all $\alpha\in \mathbb N$. Then the natural homomorphism $f$ is an isomorphism because of $\mathfrak m^\alpha/\mathfrak m^{\alpha+1}$ is a finite dimensional $k$-vector space. Hence by induction, in view of snake lemma, it implies that $f_{\alpha}$ is an isomorphism for all $\alpha \in \mathbb{N}$. Take the direct limits of $f_{\alpha}$ it induces the isomorphism
\[
\Tor_{c}^R(E,H^c_I(R))\to E
\]
Since the direct limits commutes with Tor functor and $\Supp_R(E)\subseteq V(\mathfrak{m})$. Moreover note that the vanishing of the above Tor modules implies that $\Tor_{i}^R(E,H^c_I(R))=0$ for all $i\neq c$.
\end{proof}

Now we are able to prove our main result. Before proving it we will make some notation. Let $R$ be a Gorenstein ring of dimension $n$ and $I$ an ideal with $c = \grade(I).$ We will denote $k(\mathfrak{p})$ by the residue field of $R_\mathfrak{p}$ with injective hull $E_{R_\mathfrak{p}}(k(\mathfrak{p}))$ for $\mathfrak{p}\in V(I)$. Moreover we set $h({\mathfrak p})=\dim(R_{\mathfrak p})- c$.

\begin{theorem}\label{a}
Fix the previous notation. Then for all $\mathfrak{p}\in V(I)$ the following conditions are equivalent:
\begin{itemize}
\item[(a)] $H^i_I(R)= 0$ for all $i\neq c,$ that is $I$ is a cohomologically complete intersection.

\item[(b)] The natural homomorphism
\[
H^{h({\mathfrak p})}_{{\mathfrak p}R_{\mathfrak p}}(H^c_{IR_{\mathfrak p}}(R_{\mathfrak p})) \to E_{R_\mathfrak{p}}(k(\mathfrak{p}))
\]
is an isomorphism and $H^i_{{\mathfrak p}R_{\mathfrak p}}(H^c_{IR_{\mathfrak p}}(R_{\mathfrak p}))= 0$ for all $i\neq h({\mathfrak p})$.
\item[(c)] The natural homomorphism
\[
\Tor^{R_\mathfrak{p}}_{c}( E_{R_\mathfrak{p}}(k(\mathfrak{p})),H^c_{IR_\mathfrak{p}}(R_\mathfrak{p})))\to E_{R_\mathfrak{p}}(k(\mathfrak{p}))
\]
is an isomorphism and $\Tor^{R_\mathfrak{p}}_{i}(E_{R_\mathfrak{p}}(k(\mathfrak{p})),H^c_{IR_\mathfrak{p}}(R_\mathfrak{p}))= 0$ for all $i\neq c .$
\item[(d)] The natural homomorphism
\[
 E_{R_\mathfrak{p}}(k(\mathfrak{p}))\to H^c_{\mathfrak{p}R_\mathfrak{p}}(\Hom_{R_\mathfrak{p}}(H^c_{IR_\mathfrak{p}}(R_\mathfrak{p}), E_{R_\mathfrak{p}}(k(\mathfrak{p}))
\]
is an isomorphism and $H^i_{\mathfrak{p}R_\mathfrak{p}}(\Hom_{R_\mathfrak{p}}(H^c_{IR_\mathfrak{p}}(R_\mathfrak{p}),  E_{R_\mathfrak{p}}(k(\mathfrak{p}))))= 0$ for all $i\neq c$.

\item[(e)] The natural homomorphism
\[
\Tor^{R_\mathfrak{p}}_{c}(k(\mathfrak{p}),H^c_{IR_\mathfrak{p}}(R_\mathfrak{p}))\to k(\mathfrak{p})
\]
is an isomorphism and $\Tor^{R_\mathfrak{p}}_{i}(k(\mathfrak{p}),H^c_{IR_\mathfrak{p}}(R_\mathfrak{p}))= 0$ for all $i\neq c$.
\item[(f)] The Betti numbers of $H^c_{IR_\mathfrak{p}}(R_\mathfrak{p})$ satisfy
\[
\dim_{k(\mathfrak{p})} (\Tor^{R_\mathfrak{p}}_{i}(k(\mathfrak{p}),H^c_{IR_\mathfrak{p}}(R_\mathfrak{p})))= \delta_{c,i}
\]
\end{itemize}
\end{theorem}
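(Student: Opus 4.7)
The plan is to prove the equivalence of (a) with each of (b)--(f), the latter understood to hold for every $\mathfrak{p}\in V(I)$. I would split the argument into three pieces: (a) implies each of (b)--(f), the equivalences among (b)--(f) at a fixed prime, and finally the closure (b) at every $\mathfrak{p}\in V(I)$ implies (a). The remaining implications follow formally.

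For the forward direction, if $H^i_I(R)=0$ for $i\neq c$, then for every $\mathfrak{p}\in V(I)$ localization gives $H^i_{IR_\mathfrak{p}}(R_\mathfrak{p})=0$ for $i\neq c$, so $IR_\mathfrak{p}$ is a cohomologically complete intersection of grade $c$ in the Gorenstein local ring $R_\mathfrak{p}$ of dimension $c+h(\mathfrak{p})$. Applying Corollary~\ref{01} to $(R_\mathfrak{p}, IR_\mathfrak{p})$ yields (c), (d), (e), and (f), while the ``moreover'' clause of Lemma~\ref{1} applied to $(R_\mathfrak{p}, IR_\mathfrak{p})$ yields (b).

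At a fixed $\mathfrak{p}$, the equivalence (e) $\Leftrightarrow$ (f) is immediate, (d) $\Leftrightarrow$ (e) is Theorem~\ref{6} applied to $R_\mathfrak{p}$, and Proposition~\ref{7} applied to $R_\mathfrak{p}$ gives (e) $\Rightarrow$ (b) and (e) $\Rightarrow$ (c). The equivalence (b) $\Leftrightarrow$ (c) follows from Local Duality (Lemma~\ref{2.1}(1)) for $R_\mathfrak{p}$: one has $\Tor^{R_\mathfrak{p}}_{c}(E_{R_\mathfrak{p}}(k(\mathfrak{p})), H^c_{IR_\mathfrak{p}}(R_\mathfrak{p})) \cong H^{h(\mathfrak{p})}_{\mathfrak{p}R_\mathfrak{p}}(H^c_{IR_\mathfrak{p}}(R_\mathfrak{p}))$, and analogously in other degrees, and one checks that the natural homomorphism of (c) coincides with that of (b) under this identification.

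The main step is (b) (at every $\mathfrak{p}\in V(I)$) $\Rightarrow$ (a), proved by Noetherian induction on $h(\mathfrak{p})$. The base case $h(\mathfrak{p})=0$ is automatic, since then $\mathfrak{p}$ is minimal over $I$, $IR_\mathfrak{p}$ is $\mathfrak{p}R_\mathfrak{p}$-primary, and $H^i_{IR_\mathfrak{p}}(R_\mathfrak{p})=H^i_{\mathfrak{p}R_\mathfrak{p}}(R_\mathfrak{p})=0$ for $i\neq c$ by the Gorenstein property of $R_\mathfrak{p}$. For the inductive step, the hypothesis at strictly smaller primes gives that each cohomology $H^i(C^{\cdot}_{R_\mathfrak{p}}(IR_\mathfrak{p}))$ has support contained in $\{\mathfrak{p}R_\mathfrak{p}\}$ and is therefore $\mathfrak{p}R_\mathfrak{p}$-torsion; the hypercohomology spectral sequence for $\Gamma_{\mathfrak{p}R_\mathfrak{p}}$ then degenerates and yields $H^j_{\mathfrak{p}R_\mathfrak{p}}(C^{\cdot}_{R_\mathfrak{p}}(IR_\mathfrak{p}))\cong H^j(C^{\cdot}_{R_\mathfrak{p}}(IR_\mathfrak{p}))$ for every $j$. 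On the other hand, (b) at $\mathfrak{p}$ combined with Lemma~\ref{1} for $R_\mathfrak{p}$---the middle map of the four-term exact sequence becomes an isomorphism, and the ``rest'' isomorphisms propagate the vanishing of $H^i_{\mathfrak{p}R_\mathfrak{p}}(H^c_{IR_\mathfrak{p}}(R_\mathfrak{p}))$---forces $H^j_{\mathfrak{p}R_\mathfrak{p}}(C^{\cdot}_{R_\mathfrak{p}}(IR_\mathfrak{p}))=0$ for every $j$. Hence $H^i(C^{\cdot}_{R_\mathfrak{p}}(IR_\mathfrak{p}))=0$ for every $i$, which is (a) at $\mathfrak{p}$. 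The most delicate point is this induction step, where one must synchronize the spectral sequence degeneration with the exact sequence of Lemma~\ref{1}; a secondary technical matter is checking that the natural maps appearing in (b), (c), (d) coincide with those delivered by Theorem~\ref{2} and Local Duality.
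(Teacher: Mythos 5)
Your proposal is correct and follows essentially the same route as the paper: the forward direction via localization plus Corollary \ref{01} and Lemma \ref{1}, the converse via an induction over primes (you index by $h(\mathfrak p)$ where the paper uses $\dim(R/I)$, which is equivalent) using the support-in-the-closed-point degeneration for $C^{\cdot}_{R_{\mathfrak p}}(IR_{\mathfrak p})$, and the remaining equivalences via Local Duality, Theorem \ref{6} and Proposition \ref{7}. The only point worth making explicit, which both you and the paper handle implicitly, is that conditions (b)--(f) at $\mathfrak p$ force $\grade(IR_{\mathfrak p})=c$ before Lemma \ref{1} and Proposition \ref{7} can be applied to $(R_{\mathfrak p},IR_{\mathfrak p})$ with the stated indices.
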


\begin{proof}
We firstly prove that the statements in $(a)$ and $(b)$ are equivalent. Suppose that $H^i_I(R)= 0$ for all $i\neq c.$ Then by \cite[Proposition 2.7]{waqas2} it follows that $H^i_{{\mathfrak p}R_{\mathfrak p}}(H^c_{IR_{\mathfrak p}}(R_{\mathfrak p}))= 0$ for all $i\neq c=\grade(IR_{\mathfrak p})$ and for all $\mathfrak{p}\in V(I)$. So the result follows from Lemma \ref{1}.

Conversely, suppose that the statement in $(b)$ is true. We use induction on $\dim_R(R/IR)$. Let $\dim_R(R/IR)= 0$ then it follows that $\Rad (IR)= \mathfrak{m}$. This proves the result since $R$ is Gorenstein. Now let us assume that $\dim(R/IR)> 0$. Then it is easy to see that $\dim(R_{\mathfrak p}/IR_{\mathfrak p})< \dim(R/IR)$ for all ${\mathfrak p} \in V(I)\setminus \{ \mathfrak m\}$. 

Moreover by the induction hypothesis for all $i\neq c$ and for all ${\mathfrak p} \in V(I)\setminus \{ \mathfrak m\}$ we have
\[
H^i_{IR_{\mathfrak p}}(R_{\mathfrak p})= 0.
\]
That is $\Supp(H^i_I(R))\subseteq V({\mathfrak m})$ for all $i\neq c$. Since our assumption is true for $\mathfrak p= \mathfrak m$. Then by Lemma \ref{1} it follows that $H^i_\mathfrak m(C^{\cdot}_R(I))= 0$ for all $i\in \mathbb Z.$

Since $\Supp_R(H^i(C^{\cdot}_M(I)))\subseteq V(\mathfrak m)$. So by \cite[Lemma 2.5]{waqas2} in view of definition of the truncation complex we have
\[
0=H^i_\mathfrak m(C^{\cdot}_R(I))\cong H^i(C^{\cdot}_R(I))\cong H^i_I(R)
\]
for all $c<i\leq n$. That is $H^i_{I}(R)= 0$ for all $i\neq c$. This completes the proof of $(a)$ is equivalent to $(b)$.

Now we prove that the statements in $(b)$ and $(c)$ are equivalent. If the statement in $(b)$ is true. Then by the remarks above it follows that $ c=\grade(IR_{\mathfrak p})$ for all $\mathfrak{p}\in V(I)$. Hence by Local Duality the statement in $(c)$ is obvious. Note that the converse is also true by the similar arguments of induction on $\dim_R(R/IR)$ as we used above in view of Local Duality.

Let us prove that $(c)$ is equivalent to $(e)$. Let the statement in $(c)$ be true. Then it implies that $I$ is cohomologically complete intersections. By \cite[Proposition 2.7]{waqas2} it follows that $H^i_{{\mathfrak p}R_{\mathfrak p}}(H^c_{IR_{\mathfrak p}}(R_{\mathfrak p}))= 0$ for all $i\neq c=\grade(IR_{\mathfrak p})$ and for all $\mathfrak{p}\in V(I)$. So the result is obvious by virtue of Corollary \ref{01}.

Now suppose that the statement in $(e)$ holds. By Local Duality and Proposition \ref{7} one can prove it by following the same steps of induction on $\dim_R(R/IR)$.

In the similar way we can easily see that $(c)$ and $(d)$ are equivalent by virtue of Theorem \ref{6}, Proposition \ref{7} and Local Duality. Note that the equivalence of the statements in $(e)$ and $(f)$ is obvious. This completes the equivalence of all the statements of the Theorem.
\end{proof}

\begin{remark}
Note that in the above Theorem \ref{a} one should have localization with respect to all $\mathfrak{p}\in V(I)$. To prove this let $R = k[|x_0,x_1,x_2,x_3,x_4|]$ denote the formal power series ring over any filed $k.$ Suppose that $I =(x_0,x_1)\cap (x_1,x_2)\cap (x_2,x_3)\cap (x_3,x_4).$ Then Hellus and Schenzel (see \cite[Example 4.1]{pet1}) proved that $\dim_k \Ext^i_R(k, H^2_I(R)) = \delta_{3,i}$ and $H^i_I(R) \not= 0$ for all $i \not= 2,3$ with $\grade(I)=2$. 

Then by \cite[Corollary 4.7]{waqas2} it follows that the natural homomorphism $H^d_{\mathfrak m}(H^c_I(R)) \to E$ is an isomorphism and $H^{i}_{\mathfrak m}(H^c_I(R))=0$ for all $i\neq 3.$ This shows that the local conditions are necessary in order to  prove the Theorem \ref{a}.
\end{remark}

\end{document}